\newtheorem{theorem}{Theorem}[section]
\newtheorem{lemma}[theorem]{Lemma}
\newtheorem{proposition}[theorem]{Proposition}
\newtheorem{cor}[theorem]{Corollary}
\theoremstyle{definition}
\theoremstyle{remark}
\newtheorem{remark}[theorem]{\bf{Remark}}
\numberwithin{equation}{section}
\begin{document}

\title [Numerical radius inequalities for tensor product of operators]  {Numerical radius inequalities for tensor product of operators}
	
	\author[ A. Sen, P. Bhunia, K. Paul] {Anirban Sen, Pintu Bhunia, Kallol Paul}

\address [Sen] {Department of Mathematics, Jadavpur University, Kolkata 700032, West Bengal, India}
\email{anirbansenfulia@gmail.com}

\address [Bhunia] {Department of Mathematics, Jadavpur University, Kolkata 700032, West     Bengal, India}
\email{pintubhunia5206@gmail.com}
\email{pbhunia.math.rs@jadavpuruniversity.in}

\address[Paul] {Department of Mathematics, Jadavpur University, Kolkata 700032, West Bengal, India}
\email{kalloldada@gmail.com}
\email{kallol.paul@jadavpuruniversity.in}

\thanks{Mr. Anirban Sen would like to thank CSIR, Govt. of India for the financial support in the form of Junior Research Fellowship under the mentorship of Prof Kallol Paul. Mr. Pintu Bhunia sincerely acknowledges the financial support received from UGC, Govt. of India in the form of Senior Research Fellowship under the mentorship of Prof Kallol Paul.}

\renewcommand{\subjclassname}{\textup{2020} Mathematics Subject Classification}\subjclass[]{Primary 47A12, Secondary 15A60, 47A30, 47A50}
	\keywords{Numerical radius, Operator norm, Tensor product, Cartesian decomposition, Bounded linear operator}
	
	\maketitle

	\begin{abstract}
		The two well-known numerical radius inequalities for the tensor product $A \otimes B$  acting on $\mathbb{H} \otimes \mathbb{K}$, where $A$ and $B$ are bounded linear operators defined  on complex Hilbert spaces $\mathbb{H} $ and $ \mathbb{K},$ respectively are, $ \frac{1}{2} \|A\|\|B\| \leq w(A \otimes B) \leq \|A\|\|B\| $ and $ 
			w(A)w(B) \leq w(A \otimes B) \leq \min \{ w(A) \|B\|, w(B) \|A\| \}. $ In this article we develop new lower and upper bounds for the numerical radius $w(A \otimes B)$  of the tensor product $A \otimes B $  and study the equality conditions for those bounds.

	\end{abstract}

\section{\textbf{Introduction}}
\noindent

The numerical range of a bounded linear operator on a complex Hilbert space has been an active area of research over a long period of time due to its application in different areas of pure and applied sciences. Development of bounds for the numerical radius (an important numerical constant associated with the numerical range) has attracted many mathematicians \cite{BBPMIA,BhuLAA21,BPADM,BPRIM,BPBDM,KIT05,KIT03,YamStu}  in recent years. The same for the tensor product of two operators acting on Hilbert spaces has been done by few mathematicians \cite{FosJMAA,GauLAA, GauLAMA, GAU}. In this article we focus on the development of bounds of the numerical radius of tensor product of two operators defined on  complex Hilbert spaces. Before proceeding further we introduce the notation and terminologies to be used throughout the paper. 

\smallskip
Let $A$ be a bounded linear operator on a complex Hilbert space $\mathbb{H}$ with inner product $\langle \cdot,\cdot\rangle$ and the induced norm $\|\cdot\|.$  Let $ \mathbb{B}(\mathbb{H})$ denote the $C^*$-algebra of all bounded linear operators defined on $\mathbb{H}.$ Let $A^*$ denote the adjoint of $A$ and $|A|$ denote the positive operator $({A^*A})^{1/2}.$ The Cartesian  decomposition of $A$ is  $A=\Re(A)+{\rm i}\Im(A),$ where $\Re(A)=\frac{A+A^*}{2}$ and $\Im(A)=\frac{A-A^*}{2\rm i},$  are known as  real  and imaginary part of $A,$ respectively.  The numerical range or the field of values of $A$ is defined as the range of the mapping $ x \mapsto \langle Tx,x \rangle$  on the unit sphere of $\mathbb{H}$ and is denoted by $W(A).$  From the famous Toeplitz-Hausdorff theorem it follows that  the numerical range  is always a convex set.   The numerical radius and Crawford number of $A,$ denoted as $w(A)$ and $c(A),$ respectively,  are defined as 
$	w(A)= \sup\left\{|\lambda|~:~ \lambda \in W(A)\right\}$
and $c(A)= \inf\left\{|\lambda|~:~ \lambda \in W(A)\right\}.$ The operator norm of $A$ is defined as $\|A\|=\{ \|Ax\|~:~ x \in \mathbb{H},~ \|x\|=1\}.$ It is well known (see \cite{YamStu}) that $w(A)=\sup_{\theta \in \mathbb{R}}\|\Re(e^{i \theta}A)\|=\sup_{\theta \in \mathbb{R}}\|\Im(e^{i \theta}A)\|$. It is easy to check that the numerical radius $w(\,\cdot\,) $ is a norm on  $\mathbb{B}(\mathbb{H}),$ and is equivalent to the operator norm that satisfies the inequality 
$\frac{1}{2}\|A\| \leq w(A) \leq \|A\|.$
The first inequality becomes equality if $A^2=0$ and  the second inequality becomes equality if $A^*A=AA^*$. In recent times this inequality has been improved using different techniques. Interested readers can  see \cite{BBPMIA,BhuLAA21,BPADM,BPRIM,KIT05,KIT03} and the references therein. 

Next we focus our attention to the tensor product of two complex Hilbert spaces $\mathbb{H}$ and $\mathbb{K},$   which is defined as the completion of the inner product space consisting of all elements of the form $\sum_{i=1}^{n}x_i \otimes y_i$ for $ x_i \in \mathbb{H}$ and $ y_i \in \mathbb{K}$, for $n\geq 1$, under the  inner product $\langle x \otimes y, z \otimes w \rangle=\langle x , z\rangle \langle y , w\rangle.$  The tensor product of the spaces $\mathbb{H}$ and $\mathbb{K}$ is denoted by $\mathbb{H} \otimes \mathbb{K}.$ Here the expression $x \otimes y$ is defined algebraically so as to be bilinear in the two arguments $x$ and $y.$
  The tensor product of two operators $A$ on $\mathbb{H}$ and $B$ on $\mathbb{K},$ denoted by $A \otimes B, $ is defined as $(A \otimes B)(x \otimes y)=Ax \otimes By$ for $x \otimes y \in \mathbb{H} \otimes \mathbb{K}.$ For any $A \in \mathbb{B}(\mathbb{H})$ and $B \in \mathbb{B}(\mathbb{K}),$ $A \otimes B \in \mathbb{B}(\mathbb{H} \otimes \mathbb{K})$ as it satisfies $\|A \otimes B\|=\|A\|\|B\|$.
 Therefore, any $A \otimes B \in \mathbb{B}(\mathbb{H} \otimes \mathbb{K})$ satisfies the following inequality
     \begin{eqnarray}\label{eqn1}
     	\frac{1}{2} \|A\|\|B\| \leq w(A \otimes B) \leq \|A\|\|B\|.
     \end{eqnarray}
Observe that the constants $\frac12$ and $1$ are best possible.  $\frac{1}{2} \|A\|\|B\| = w(A \otimes B)$ if $A^2=0$, $B$ is normal and  $ w(A \otimes B) = \|A\|\|B\|$ if $A$, $B$ are normal. We also note the following well-known inequality
 \begin{eqnarray}\label{eqn10}
	w(A)w(B) \leq w(A \otimes B) \leq \min \{ w(A) \|B\|, w(B) \|A\| \}.
\end{eqnarray}  
First inequality follows from the fact that $ w(A \otimes B) \geq |\langle(A \otimes B)x\otimes y, x\otimes y  \rangle|= |\langle Ax,x\rangle| \, | \langle By,y\rangle|$ with $\| x\|=\|y\|=1$. Following \cite[Th. 3.4]{double}, the second inequality follows from the fact that $A \otimes B= (A \otimes I_{\mathbb{K}})(I_{\mathbb{H}} \otimes B)$ ($I_{\mathbb{H}}$ and $I_{\mathbb{K}}$ are the identity operators on $\mathbb{H}$ and $\mathbb{K}$, respectively) and the two operators $A \otimes I_{\mathbb{K}}$ and $I_{\mathbb{H}} \otimes B$  double commute, that is, $A \otimes I_{\mathbb{K}}$ commutes with both $I_{\mathbb{H}} \otimes B$ and its adjoint $I_{\mathbb{H}} \otimes B^*$.
The authors in \cite{GAU} studied various equality conditions of the above inequalities in \eqref{eqn10}. We see that the first inequality in \eqref{eqn1} is not comparable, in general, with the first inequality in \eqref{eqn10}.  
In this paper,  we obtain many improvements of the first inequality in \eqref{eqn1}. We also give a complete characterization for the equality of $ w(A \otimes B)=\frac{1}{2} \|A\|\|B\|$. Other equality conditions are also studied. Further, we obtain various  refinements of the second inequality in \eqref{eqn1}.
	
\section{\textbf{Main results}}

We begin this section by  noting that for the tensor product of two operators $A \otimes B ,$ the numerical radius of $A \otimes B$ satisfies the following inequality
\begin{eqnarray}\label{eqna}
    w(A)w(B) \leq w(A \otimes B) \leq 2w(A)w(B).
\end{eqnarray}
Observe that the scalars $1$ and $2$ are the best possible constants. If $A$ or $B$ is normal then $w(A)w(B) = w(A \otimes B)$ and if  $A^2=B^2=0$ then $w(A \otimes B)=2w(A)w(B).$ 
We first obtain an upper bound for the numerical radius of $A \otimes B$ which improves the second inequality in \eqref{eqna}. For this purpose,  we define the numerical radius distance $d(A)$ of the operator $A$ from the scalar operators,
 which is defined as $d(A)=\inf \{ w(A-\lambda I) : {\lambda \in \mathbb{C}} \}.$ Now, we are in a position to prove the following improvement.

\begin{theorem}\label{theo6}
	Let $A \otimes B \in \mathbb{B}(\mathbb{H} \otimes \mathbb{K}).$ Then 
	\begin{eqnarray*}
		w(A \otimes B) \leq \min\left\{w(A)\Big(w(B)+d(B)\Big), w(B)\Big(w(A)+d(A)\Big)\right\} \leq 2w(A)w(B).
	\end{eqnarray*}
\end{theorem}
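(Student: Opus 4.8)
I would treat the two inequalities separately, since they are of quite different natures. The second one, $w(A)(w(B)+d(B))\le 2w(A)w(B)$, is immediate: taking $\lambda=0$ in the infimum defining $d(B)$ gives $d(B)\le w(B)$, whence $w(B)+d(B)\le 2w(B)$; the symmetric statement gives $w(A)+d(A)\le 2w(A)$, and the minimum of two quantities each bounded by $2w(A)w(B)$ is itself so bounded.

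For the first inequality the whole point is to sharpen the step $\|B\|\le 2w(B)$ that produced the constant $2$ in \eqref{eqna}. Recall from \eqref{eqn10} that $w(A\otimes B)\le w(A)\|B\|$ and $w(A\otimes B)\le \|A\|w(B)$ (these come from writing $A\otimes B=(A\otimes I_{\mathbb K})(I_{\mathbb H}\otimes B)$ with the two factors double commuting, together with $w(A\otimes I_{\mathbb K})=w(A)$ and $\|I_{\mathbb H}\otimes B\|=\|B\|$). Thus the theorem follows at once from the purely single-operator inequality
\[
\|B\|\le w(B)+d(B),
\]
applied to $B$ and, symmetrically, to $A$: feeding it into $w(A\otimes B)\le w(A)\|B\|$ yields $w(A\otimes B)\le w(A)(w(B)+d(B))$, while feeding the analogue for $A$ into $w(A\otimes B)\le \|A\|w(B)$ yields $w(A\otimes B)\le w(B)(w(A)+d(A))$; taking the minimum finishes the bound. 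So the entire content of the theorem is this scalar-free lemma, which I would state and prove first.

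Since $d(B)=\inf_\lambda w(B-\lambda I)$ is an infimum sitting on the larger side of the desired inequality, the lemma is equivalent to the family of inequalities $\|B\|\le w(B)+w(B-\lambda I)$ for \emph{every} $\lambda\in\mathbb{C}$. I would prove this by compression to two dimensions. Given $\varepsilon>0$, choose unit vectors $x,y$ with $|\langle Bx,y\rangle|>\|B\|-\varepsilon$ and let $P$ be the orthogonal projection onto $M=\mathrm{span}\{x,y\}$; the compression $T=PB|_M$ satisfies $\|T\|\ge|\langle Bx,y\rangle|$, while $W(T)\subseteq W(B)$ forces $w(T)\le w(B)$ and $w(T-\lambda I)\le w(B-\lambda I)$. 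Hence it suffices to prove $\|T\|\le w(T)+w(T-\lambda I)$ for an arbitrary $2\times2$ matrix $T$. After a unitary (Schur) reduction $T=\begin{pmatrix}a&r\\0&b\end{pmatrix}$ with $r\ge0$, the eigenvalue computation
\[
\lambda_{\max}\big(\Re(e^{i\theta}(T-\lambda I))\big)=\Re\!\big(e^{i\theta}\tfrac{a+b}{2}-e^{i\theta}\lambda\big)+\sqrt{\Re(e^{i\theta}\tfrac{a-b}{2})^2+\tfrac{r^2}{4}},
\]
combined with the antipodal symmetry $\theta\leftrightarrow\theta+\pi$ (which flips the sign of the first summand and fixes the square root), shows that $w(T-\lambda I)\ge\sqrt{|\tfrac{a-b}{2}|^2+\tfrac{r^2}{4}}$ for every $\lambda$, with equality at $\lambda=\tfrac{a+b}{2}$; thus $d(T)=\sqrt{|\tfrac{a-b}{2}|^2+\tfrac{r^2}{4}}$, and it remains to verify the single inequality $\|T\|\le w(T)+\sqrt{|\tfrac{a-b}{2}|^2+\tfrac{r^2}{4}}$.

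The main obstacle is exactly this last $2\times2$ inequality: controlling the gap $\|T\|-w(T)$ between the operator norm (the largest singular value of $T$) and the numerical radius by the ``spread'' $\sqrt{|\tfrac{a-b}{2}|^2+\tfrac{r^2}{4}}$. Everything before it is soft — the double-commutation bounds, the reduction by compression, and the antipodal symmetry identifying $d(T)$ — but this step requires the explicit closed forms for $\|T\|$ and for $w(T)=\sup_\theta\big[\Re(e^{i\theta}\tfrac{a+b}{2})+\sqrt{\Re(e^{i\theta}\tfrac{a-b}{2})^2+\tfrac{r^2}{4}}\big]$, together with an honest estimate relating them. I expect to settle it by squaring and optimizing in $\theta$, reading off the equality cases (notably $a=b$, i.e.\ $T$ a scalar plus a nilpotent, where the bound is tight) in the process.
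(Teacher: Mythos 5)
Your reduction of the theorem to the single-operator inequality $\|B\|\le w(B)+d(B)$ is valid: feeding it into $w(A\otimes B)\le w(A)\|B\|$ from \eqref{eqn10} does give the first bound, and your handling of the infimum (it suffices to prove $\|B\|\le w(B)+w(B-\lambda I)$ for every $\lambda$) and of the second inequality ($d(B)\le w(B)$ via $\lambda=0$) is correct. This is in fact the same mechanism as the paper's proof, just disentangled from the tensor product. However, your proposed proof of the key lemma has a genuine gap: after the (correct) compression to a $2\times 2$ matrix $T=\begin{pmatrix}a&r\\0&b\end{pmatrix}$ and the (correct) identification $d(T)=\sqrt{|\tfrac{a-b}{2}|^2+\tfrac{r^2}{4}}$, the inequality $\|T\|\le w(T)+\sqrt{|\tfrac{a-b}{2}|^2+\tfrac{r^2}{4}}$ --- which you yourself identify as carrying all the content --- is only asserted, with ``I expect to settle it by squaring and optimizing.'' That step compares the largest singular value of $T$ with $w(T)$ plus the semi-major axis of the numerical-range ellipse; it is true but nontrivial, and until it is carried out the proof is incomplete.

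The gap is avoidable, because the lemma has a two-line proof that needs no finite-dimensional reduction and is exactly the paper's argument stripped of the tensor factor. By compactness pick $\lambda_0$ with $w(B-\lambda_0 I)=d(B)$; if $\lambda_0=0$ the claim is just $\|B\|\le 2w(B)$, and otherwise set $\mu=\overline{\lambda_0}/|\lambda_0|$, so that $\mu\lambda_0=|\lambda_0|$ is real and hence $\Im(\mu B)=\Im\big(\mu(B-\lambda_0 I)\big)$. Then $\|B\|=\|\mu B\|\le\|\Re(\mu B)\|+\|\Im(\mu B)\|\le w(\mu B)+w\big(\mu(B-\lambda_0 I)\big)=w(B)+d(B)$, using $\|\Re(C)\|,\|\Im(C)\|\le w(C)$. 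The paper runs this same rotation-plus-Cartesian-decomposition computation with the factor $A\otimes{}$ carried along, using $w(A\otimes H)=w(A)\|H\|$ for selfadjoint $H$; your version, once the lemma is actually proved this way, is arguably cleaner in that it isolates a reusable scalar statement ($\|B\|\le w(B)+d(B)$, i.e.\ a bound on the norm--numerical-radius gap by the distance to the scalars). But as written, your route replaces the easy step by a hard unproved one.
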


\begin{proof}
	It follows from the compactness argument that 
	there exists $\lambda_0 \in \mathbb{C}$ such that $w(B-\lambda_0 I)=d(B).$ If $\lambda_0=0,$ then $w(A \otimes B) \leq w(A)\left(w(B)+d(B)\right) = 2w(A)w(B).$ Now, let $\lambda_0 \neq 0$, consider  $\mu=\frac{\overline{\lambda_0}}{|\lambda_0|}$. Then we get
	\begin{eqnarray}
	w(A \otimes B)  &=& w(A \otimes (\mu B))\nonumber\\
	&=& w\left(A \otimes \Re(\mu B) +iA \otimes \Im(\mu B)\right)\nonumber\\
	&\leq& w\left(A \otimes \Re(\mu B)\right)+w\left(A \otimes \Im(\mu B)\right)\nonumber\\
	&=& w(A)\left(\|\Re(\mu B)\|+\|\Im(\mu B)\|\right) \,\, \Big(\text{$\Re(\mu B)$, $\Im(\mu B)$ are selfadjoint}\Big)\nonumber\\
	&=& w(A)\left(\|\Re(\mu B)\|+\|\Im(\mu (B-\lambda_0 I))\|\right)\nonumber\\
	&\leq& w(A)\left(w(\mu B)+w(\mu (B-\lambda_0 I))\right)\nonumber\\
	&=& w(A)\left(w(B)+w(B-\lambda_0 I)\right).\nonumber
\end{eqnarray}
Hence
	\begin{eqnarray}\label{theo6eqn1}
		w(A \otimes B) & \leq & w(A)\left(w(B)+d(B)\right).
	\end{eqnarray}
Similarly, we can prove that 
	\begin{eqnarray}\label{theo6eqn2}
	w(A \otimes B) & \leq&  w(B)\left(w(A)+d(A)\right).
	\end{eqnarray}
	Now, combining (\ref{theo6eqn1}) and (\ref{theo6eqn2}) we get the desired first inequality. The second inequality follows from $d(A)\leq w(A)$ and $d(B)\leq w(B).$
\end{proof}

\begin{remark}
	
For any operator $A\in \mathbb{B}(\mathbb{H})$, clearly $d(A)=w(A)$ if and only if $A$ is numerical radius orthogonal to $I$ in the sense of Birkhoff-James.  Therefore, it follows from Theorem \ref{theo6} that if $w(A \otimes B) = 2w(A)w(B)$ then both $A$ and $B$ both are  numerical radius orthogonal to $I$ in the sense of Birkhoff-James.  For more on  numerical radius orthogonality we refer to \cite{MPSMon}.	

\end{remark}


To prove next result we need the following sequence of lemmas.

\begin{lemma}$($\cite{KIT1988}$)$.\label{lemma1}
	Let $A\in \mathbb{B}(\mathbb{H})$ be positive, and let $x\in \mathbb{H}$ with $\|x\|=1.$ Then 
	\begin{eqnarray*}
		\langle Ax,x\rangle^r\leq \langle A^rx,x\rangle
	\end{eqnarray*}
	for all $r \geq 1.$ 
\end{lemma}

\begin{lemma}$($\cite{HalBook82}$)$.\label{lemma2}
	Let $A,B\in \mathbb{B}(\mathbb{H})$, and  let $x, y\in \mathbb{H}$. Then 
	\[|\langle Ax,x\rangle|\leq \langle |A|x,x\rangle^{1/2}\langle |A^*|x,x\rangle^{1/2}.\] 
\end{lemma}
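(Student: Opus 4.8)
The plan is to establish this mixed Schwarz inequality through the polar decomposition of $A$ together with the ordinary Cauchy--Schwarz inequality (the role of $y$ in the hypothesis is vestigial, since the stated bound involves only $x$). First I would write $A=U|A|$, where $U$ is the partial isometry furnished by the polar decomposition, with initial space $\overline{\mathrm{ran}\,|A|}$. Factoring $|A|=|A|^{1/2}|A|^{1/2}$, moving one half-power together with $U$ across the inner product, and using that $|A|^{1/2}$ is selfadjoint, one gets
\[
\langle Ax,x\rangle=\langle U|A|^{1/2}\,|A|^{1/2}x,\,x\rangle=\langle |A|^{1/2}x,\,|A|^{1/2}U^*x\rangle .
\]

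Next I would apply the Cauchy--Schwarz inequality to the right-hand side, obtaining
\[
|\langle Ax,x\rangle|\leq \||A|^{1/2}x\|\,\||A|^{1/2}U^*x\| .
\]
The first factor is immediate, since $\||A|^{1/2}x\|^2=\langle |A|x,x\rangle$. For the second factor I would compute $\||A|^{1/2}U^*x\|^2=\langle U|A|U^*x,\,x\rangle$, so the whole matter reduces to identifying the positive operator $U|A|U^*$.

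The key step, which I expect to be the main obstacle, is the operator identity $U|A|U^*=|A^*|$. I would verify it by squaring: because $U^*U$ is the orthogonal projection onto the initial space $\overline{\mathrm{ran}\,|A|}$, it fixes $|A|$, so $U^*U|A|=|A|$, and hence $(U|A|U^*)^2=U|A|(U^*U)|A|U^*=U|A|^2U^*=U|A|\,|A|U^*=AA^*=|A^*|^2$. Since $U|A|U^*$ is positive and $|A^*|$ is by definition the unique positive square root of $AA^*$, uniqueness of positive square roots forces $U|A|U^*=|A^*|$. Substituting this back gives $\||A|^{1/2}U^*x\|^2=\langle |A^*|x,x\rangle$, and combining the two factors yields the claimed bound $|\langle Ax,x\rangle|\leq \langle |A|x,x\rangle^{1/2}\langle |A^*|x,x\rangle^{1/2}$. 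The one delicate point throughout is the bookkeeping for the partial isometry, and specifically the relation $U^*U|A|=|A|$; this is exactly where positivity of $|A|$ and the structure of the polar decomposition enter, and it is what makes the square-root argument legitimate.
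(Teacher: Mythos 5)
Your proof is correct and complete: the factorization $\langle Ax,x\rangle=\langle |A|^{1/2}x,|A|^{1/2}U^*x\rangle$ via the polar decomposition, followed by Cauchy--Schwarz and the identity $U|A|U^*=|A^*|$ (justified by positivity, $(U|A|U^*)^2=AA^*$, and uniqueness of positive square roots), is exactly the standard argument for the mixed Schwarz inequality. The paper itself gives no proof, citing Halmos instead, and your argument is essentially the one found there (where the general form $|\langle Ax,y\rangle|\leq\langle |A|x,x\rangle^{1/2}\langle |A^*|y,y\rangle^{1/2}$ explains the otherwise vestigial $y$ in the statement), so there is nothing to correct.
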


\begin{lemma}$($\cite{KITJFA1997}$)$.\label{lemma3}
	Let $A,B\in \mathbb{B}(\mathbb{H})$ be positive. Then 
	\[\|A+B\|\leq \max\{\|A\|,\|B\|\}+\left\|A^{1/2}B^{1/2}\right\|.\]	 
\end{lemma}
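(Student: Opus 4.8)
The plan is to realize $\|A+B\|$ as the norm of an off-diagonal perturbation of a diagonal operator matrix, and then finish by the triangle inequality. Since $A$ and $B$ are positive, their square roots $A^{1/2}$ and $B^{1/2}$ are well-defined positive operators. I would introduce the column operator $X=\begin{pmatrix} A^{1/2} \\ B^{1/2}\end{pmatrix}\colon \mathbb{H}\to \mathbb{H}\oplus\mathbb{H}$, so that $X^*X=A+B$ acts on $\mathbb{H}$, while
\[
XX^*=\begin{pmatrix} A & A^{1/2}B^{1/2} \\ B^{1/2}A^{1/2} & B\end{pmatrix}
\]
acts on $\mathbb{H}\oplus\mathbb{H}$.

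The key identity I would invoke is $\|X^*X\|=\|X\|^2=\|XX^*\|$, valid for any bounded operator between Hilbert spaces. This converts the scalar quantity $\|A+B\|$ into the norm of a $2\times 2$ operator matrix, namely
\[
\|A+B\|=\left\|\begin{pmatrix} A & A^{1/2}B^{1/2} \\ B^{1/2}A^{1/2} & B\end{pmatrix}\right\|.
\]
Next I would split this matrix as the sum of the diagonal part $D=\begin{pmatrix} A & 0 \\ 0 & B\end{pmatrix}$ and the off-diagonal part $O=\begin{pmatrix} 0 & A^{1/2}B^{1/2} \\ B^{1/2}A^{1/2} & 0\end{pmatrix}$, and estimate each summand. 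The diagonal part clearly satisfies $\|D\|=\max\{\|A\|,\|B\|\}$.

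For the off-diagonal part, writing $T=A^{1/2}B^{1/2}$ so that $O=\begin{pmatrix} 0 & T \\ T^* & 0\end{pmatrix}$ is self-adjoint, I would use $O^2=\begin{pmatrix} TT^* & 0 \\ 0 & T^*T\end{pmatrix}$ together with the spectral relation $\|O\|^2=\|O^2\|=\max\{\|TT^*\|,\|T^*T\|\}=\|T\|^2$, giving $\|O\|=\|A^{1/2}B^{1/2}\|$. Adding the two estimates via the triangle inequality $\|D+O\|\le\|D\|+\|O\|$ then yields the claimed bound. I do not anticipate a serious obstacle, since the argument is essentially a packaging of well-known facts; the only point requiring a little care is the identity $\|O\|=\|T\|$ for the off-diagonal self-adjoint block matrix, which I would justify through $\|O\|^2=\|O^2\|$ (legitimate because $O$ is self-adjoint) rather than by a direct variational computation.
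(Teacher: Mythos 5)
The paper does not prove this lemma at all; it is quoted verbatim from Kittaneh's 1997 paper, so there is no internal proof to compare against. Your argument is correct and self-contained: the column-operator identity $X^*X=A+B$, $\|X^*X\|=\|XX^*\|$, the splitting of $XX^*$ into diagonal and off-diagonal parts, and the evaluation $\|O\|=\|A^{1/2}B^{1/2}\|$ via $\|O\|^2=\|O^2\|$ are all sound, and this is essentially the standard route to this inequality (Kittaneh's own argument uses the same operator-matrix realization, refined by bounding $\|XX^*\|$ by the norm of the $2\times 2$ scalar matrix of block norms, which yields a slightly sharper constant that specializes to the stated bound).
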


Now, we obtain the following upper bounds for $w( A \otimes B)$.

\begin{theorem}\label{theo1a}
	Let $A \otimes B \in \mathbb{B}(\mathbb{H} \otimes \mathbb{K}).$ Then 
	\begin{eqnarray*}
		w^2(A \otimes B)&& \leq \frac{1}{4} \left \| \, |A|^2 \otimes |B|^2+|A^*|^2 \otimes |B^*|^2\, \right\|+\frac{1}{2}\left \|\, \Re(|A||A^*| \otimes |B||B^*|)\, \right \|\\
		&& \leq \frac{1}{4}\left(\|A\|\|B\|+\|A^2\|^{1/2}\|B^2\|^{1/2}\right)^2\\
		&& \leq \|A\|^2\|B\|^2.
	\end{eqnarray*}
\end{theorem}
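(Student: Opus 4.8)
The plan is to set $T = A \otimes B$ and exploit that $|T| = |A| \otimes |B|$ and $|T^*| = |A^*| \otimes |B^*|$, so that $|T|^2 = |A|^2 \otimes |B|^2$, $|T^*|^2 = |A^*|^2 \otimes |B^*|^2$, and $|T|\,|T^*| = |A||A^*| \otimes |B||B^*|$. For the first (and crucial) inequality I would fix a unit vector $z \in \mathbb{H} \otimes \mathbb{K}$ and begin from the mixed Cauchy--Schwarz estimate of Lemma \ref{lemma2}, namely $|\langle Tz, z\rangle| \le \langle |T|z, z\rangle^{1/2} \langle |T^*|z, z\rangle^{1/2}$. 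Squaring and applying the elementary bound $ab \le \tfrac14 (a+b)^2$ with $a = \langle |T|z,z\rangle$ and $b = \langle |T^*|z,z\rangle$ gives $|\langle Tz, z\rangle|^2 \le \tfrac14 \langle (|T| + |T^*|)z, z\rangle^2$. Since $|T| + |T^*|$ is positive, Lemma \ref{lemma1} with $r = 2$ yields $\langle (|T|+|T^*|)z, z\rangle^2 \le \langle (|T|+|T^*|)^2 z, z\rangle$. Taking the supremum over unit vectors $z$ and using the positivity of $(|T|+|T^*|)^2$ to identify this supremum with the operator norm, I obtain $w^2(T) \le \tfrac14 \| (|T|+|T^*|)^2 \|$. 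Expanding $(|T|+|T^*|)^2 = |T|^2 + |T^*|^2 + 2\Re(|T|\,|T^*|)$ and splitting by the triangle inequality produces precisely $w^2(T) \le \tfrac14 \||T|^2 + |T^*|^2\| + \tfrac12 \|\Re(|T|\,|T^*|)\|$, which is the first inequality after substituting the tensor identities above.

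For the second inequality I would bound the two terms separately. To the positive operators $|A|^2 \otimes |B|^2$ and $|A^*|^2 \otimes |B^*|^2$ I apply Lemma \ref{lemma3}; their norms both equal $\|A\|^2\|B\|^2$ (since $\| |A|^2 \| = \|A\|^2$ and the norm is multiplicative on tensor products), while their square roots are $|A| \otimes |B|$ and $|A^*| \otimes |B^*|$, whose product has norm $\||A||A^*|\| \, \||B||B^*|\|$. Here I would invoke the identity $\||A||A^*|\| = \|A^2\|$, provable via the polar decomposition $A = U|A|$, which gives $|A^*| = U|A|U^*$ and hence $\||A||A^*|\| = \||A|U|A|\| = \|A^2\|$. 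Thus Lemma \ref{lemma3} yields $\||A|^2 \otimes |B|^2 + |A^*|^2 \otimes |B^*|^2\| \le \|A\|^2\|B\|^2 + \|A^2\|\|B^2\|$. For the second term, $\|\Re(X)\| \le \|X\|$ with $X = |A||A^*| \otimes |B||B^*|$ gives $\|\Re(X)\| \le \|A^2\|\|B^2\|$.

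Writing $p = \|A\|\|B\|$ and $q = \|A^2\|^{1/2}\|B^2\|^{1/2}$, these estimates assemble to $\tfrac14(p^2 + q^2) + \tfrac12 q^2 = \tfrac14 p^2 + \tfrac34 q^2$. The target middle expression is $\tfrac14(p+q)^2 = \tfrac14 p^2 + \tfrac12 pq + \tfrac14 q^2$, and the remaining gap $\tfrac12 pq - \tfrac12 q^2 = \tfrac12 q(p - q)$ is nonnegative precisely because $\|A^2\| \le \|A\|^2$ and $\|B^2\| \le \|B\|^2$ force $q \le p$. This same comparison $q \le p$ then immediately gives the final estimate $\tfrac14(p+q)^2 \le \tfrac14(2p)^2 = p^2 = \|A\|^2\|B\|^2$.

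The computational steps are routine once the three lemmas are in hand, so I expect the main obstacle to be the second inequality: specifically the norm identity $\||A||A^*|\| = \|A^2\|$ and the careful constant bookkeeping needed to fold $\tfrac14 p^2 + \tfrac34 q^2$ into the perfect square $\tfrac14(p+q)^2$, both of which hinge on the single comparison $\|A^2\|^{1/2}\|B^2\|^{1/2} \le \|A\|\|B\|$. A secondary point requiring care is the interchange of the supremum with the passage to the operator norm in the first inequality, which is legitimate only because $(|T|+|T^*|)^2$ is positive.
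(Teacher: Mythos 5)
Your proposal is correct and follows essentially the same route as the paper: Lemma \ref{lemma2} plus AM--GM, then Lemma \ref{lemma1} with $r=2$ applied to $|A\otimes B|+|A^*\otimes B^*|$, expansion of the square, and Lemma \ref{lemma3} together with $\|\,|A||A^*|\,\|=\|A^2\|$ for the second inequality. The only cosmetic difference is in the final bookkeeping: the paper bounds $\tfrac12\|A^2\|\|B^2\|$ directly by $\tfrac12\|A\|\|A^2\|^{1/2}\|B\|\|B^2\|^{1/2}$ to land exactly on $\tfrac14(p+q)^2$, whereas you keep $\tfrac34 q^2$ and absorb the difference using $q\le p$ --- the same inequality rearranged.
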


\begin{proof}
		Let $f \in \mathbb{H} \otimes \mathbb{K}$ with $\|f\|=1.$ Now, using Lemma \ref{lemma2} we get
		\begin{eqnarray*}
			\left|\left\langle (A \otimes B)f,f \right\rangle \right|
			&& \leq \left\langle |A \otimes B|f,f \right\rangle^{1/2} \left\langle |A^* \otimes B^*| f,f \right\rangle^{1/2}\\
			&& \leq \frac{1}{2}\left\langle \left(|A \otimes B|+|A^* \otimes B^*|\right)f,f \right\rangle  
		\end{eqnarray*}
	Therefore, we have
	\begin{eqnarray*}
		\left|\left\langle (A \otimes B)f,f \right\rangle \right|^2
		&& \leq \frac{1}{4}\left\langle \left(|A \otimes B|+|A^* \otimes B^*|\right) f,f \right\rangle^{2}\\
		&& \leq \frac{1}{4}\left\langle \left(|A \otimes B|+|A^* \otimes B^*|\right)^2 f,f \right\rangle
		\,\,\Big(\mbox{by Lemma \ref{lemma1}}\Big)\\
		&& = \frac{1}{4}\left\langle \left(|A|^2 \otimes |B|^2+|A^*|^2 \otimes |B^*|^2\right)f,f \right\rangle\\
		&&  \,\,\,\,\,\, +\frac{1}{2}\left\langle \Re(|A||A^*| \otimes |B||B^*|)f,f \right\rangle\\
		&& \leq \frac{1}{4}\left\| \, |A|^2 \otimes |B|^2+|A^*|^2 \otimes |B^*|^2\|+\frac{1}{2}\|\Re(|A||A^*| \otimes |B||B^*|) \, \right \|.
	\end{eqnarray*}
   Taking supremum over all  $f \in \mathbb{H} \otimes \mathbb{K}$ with $\|f\|=1$ we get the first inequality.
   Now, clearly $\| \, |A||A^*| \otimes |B||B^*| \, \|=\|A^2\|\|B^2\|$ and using Lemma \ref{lemma3} we have
   	\begin{eqnarray}\label{theo1aeqn1}
   	\frac{1}{4}\left \| \, |A|^2 \otimes |B|^2+|A^*|^2 \otimes |B^*|^2\, \right \| && \leq \frac{1}{4}\left(\| \, |A|^2 \otimes |B|^2\|+\||A||A^*| \otimes |B||B^*| \, \|\right)\nonumber\\
   	&&= \frac{1}{4}\left(\|A\|^2\|B\|^2+\|A^2\|\|B^2\|\right).
   	\end{eqnarray}
   	Also,
   	\begin{eqnarray}\label{theo1aeqn2}
   	  \frac{1}{2}\|\Re(|A||A^*| \otimes |B||B^*|)\| \leq \frac{1}{2}\|A^2\|\|B^2\| \leq \frac{1}{2}\|A\|\|A^2\|^{1/2}\|B\|\|B^2\|^{1/2}.
   	\end{eqnarray}
   	Combining (\ref{theo1aeqn1}) and (\ref{theo1aeqn2}) we get the second inequality. The third inequality follows trivially.
\end{proof}

From Theorem \ref{theo1a} and the inequality \eqref{eqn1}  it follows that  $w(A \otimes B)=\frac12 \|A\| \|B\|$ if $A^2=B^2=0.$
Next bound reads as follows.

\begin{theorem}\label{theo1b}
	Let $A \otimes B \in \mathbb{B}(\mathbb{H} \otimes \mathbb{K}).$ Then 
	\begin{eqnarray*}
		w^2(A \otimes B)
		\leq \frac{1}{2}\|A^*A \otimes B^*B+AA^* \otimes BB^*\| \leq \|A\|^2\|B\|^2.
	\end{eqnarray*}
\end{theorem}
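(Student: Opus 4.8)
The plan is to derive the first inequality as the specialization to $T=A\otimes B$ of the general numerical radius bound $w^2(T)\le \tfrac12\big\||T|^2+|T^*|^2\big\|$, and then to read off the second inequality from the multiplicativity of the operator norm under tensor products. The point that makes the statement clean is that for the tensor product $T=A\otimes B$ one has $|T|^2=T^*T=(A^*\otimes B^*)(A\otimes B)=A^*A\otimes B^*B$ and $|T^*|^2=TT^*=AA^*\otimes BB^*$, so the right-hand side of the general bound is exactly $\tfrac12\|A^*A\otimes B^*B+AA^*\otimes BB^*\|$.

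To establish the general bound I would fix a unit vector $f\in\mathbb{H}\otimes\mathbb{K}$ and start from Lemma \ref{lemma2} applied to $T$, which gives $|\langle Tf,f\rangle|\le \langle |T|f,f\rangle^{1/2}\langle |T^*|f,f\rangle^{1/2}$. Applying the arithmetic-geometric mean inequality converts the product into $\tfrac12\big(\langle |T|f,f\rangle+\langle |T^*|f,f\rangle\big)$; squaring and using the elementary estimate $(a+b)^2\le 2(a^2+b^2)$ yields $|\langle Tf,f\rangle|^2\le \tfrac12\big(\langle |T|f,f\rangle^2+\langle |T^*|f,f\rangle^2\big)$. The final ingredient is Lemma \ref{lemma1} with $r=2$ applied to the positive operators $|T|$ and $|T^*|$, which replaces $\langle |T|f,f\rangle^2$ by $\langle |T|^2f,f\rangle$ and similarly for $|T^*|$. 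Collecting these gives $|\langle Tf,f\rangle|^2\le \tfrac12\langle(|T|^2+|T^*|^2)f,f\rangle\le \tfrac12\||T|^2+|T^*|^2\|$, and taking the supremum over all unit vectors $f$ produces $w^2(T)\le \tfrac12\||T|^2+|T^*|^2\|$.

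Substituting the tensor identities for $|T|^2$ and $|T^*|^2$ computed above gives the first inequality of the theorem. For the second inequality I would simply apply the triangle inequality, $\|A^*A\otimes B^*B+AA^*\otimes BB^*\|\le \|A^*A\otimes B^*B\|+\|AA^*\otimes BB^*\|$, and then use $\|X\otimes Y\|=\|X\|\|Y\|$ together with $\|A^*A\|=\|AA^*\|=\|A\|^2$ and $\|B^*B\|=\|BB^*\|=\|B\|^2$; each of the two summands equals $\|A\|^2\|B\|^2$, so the bound becomes $\tfrac12\cdot 2\|A\|^2\|B\|^2=\|A\|^2\|B\|^2$.

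I do not expect a genuine obstacle here: the argument is a chaining of the mixed Schwarz inequality (Lemma \ref{lemma2}), the arithmetic-geometric mean inequality, the convexity estimate $(a+b)^2\le 2(a^2+b^2)$, and the power inequality (Lemma \ref{lemma1}), all of which are already assembled in the excerpt. The only place demanding care is the bookkeeping of the positive operators: one must verify that $|A\otimes B|^2=A^*A\otimes B^*B$ and $|(A\otimes B)^*|^2=AA^*\otimes BB^*$ rather than, say, mixing $A^*A$ with $BB^*$, which is precisely what distinguishes this bound from the one in Theorem \ref{theo1a}.
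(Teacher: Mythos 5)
Your argument is correct, and both inequalities land exactly where the theorem needs them, but your route to the first inequality is genuinely different from the paper's. You derive the general bound $w^2(T)\le\tfrac12\bigl\||T|^2+|T^*|^2\bigr\|=\tfrac12\|T^*T+TT^*\|$ by chaining the mixed Schwarz inequality (Lemma~\ref{lemma2}), the arithmetic--geometric mean inequality, the convexity estimate $(a+b)^2\le 2(a^2+b^2)$, and the power inequality (Lemma~\ref{lemma1}), and then specialize to $T=A\otimes B$; this is essentially the machinery the paper reserves for Theorem~\ref{theo1a}. The paper instead proves Theorem~\ref{theo1b} via the Cartesian decomposition: for a unit vector $f$ it writes $|\langle(A\otimes B)f,f\rangle|^2=\langle\Re(A\otimes B)f,f\rangle^2+\langle\Im(A\otimes B)f,f\rangle^2\le\|\Re(A\otimes B)f\|^2+\|\Im(A\otimes B)f\|^2=\langle(\Re^2+\Im^2)(A\otimes B)f,f\rangle$, and uses the algebraic identity $\Re^2(T)+\Im^2(T)=\tfrac12(T^*T+TT^*)$. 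The paper's route is slightly more economical, needing only Cauchy--Schwarz and no external lemmas; yours has the advantage of isolating a clean general operator inequality of which the theorem is a one-line specialization, and it makes transparent why the bound here differs from that of Theorem~\ref{theo1a} only in how the cross term $|A||A^*|\otimes|B||B^*|$ is handled. Your verification of $|A\otimes B|^2=A^*A\otimes B^*B$ and $|(A\otimes B)^*|^2=AA^*\otimes BB^*$, and the triangle-inequality argument for the second inequality, match the paper's.
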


\begin{proof}
	Let $f \in \mathbb{H} \otimes \mathbb{K}$ with $\|f\|=1.$ Then by the Cartesian decomposition of $A\otimes B,$ we get
	\begin{eqnarray*}
		\left|\left\langle (A \otimes B) f,f \right\rangle \right|^2
		&&=\left \langle \Re(A \otimes B) f,f\right\rangle^2+\left\langle \Im(A \otimes B)f,f\right\rangle^2\\
		&& \leq \left\| \Re(A \otimes B)f\right\|^2+\left\| \Im(A \otimes B)f\right\|^2\\
		&&=\left\langle \Re^2(A \otimes B)f,f\right\rangle+\left\langle \Im^2(A \otimes B)f,f\right\rangle\\
		&&=\left\langle \left(\Re^2(A \otimes B)+\Im^2(A \otimes B)\right)f,f\right\rangle\\
		&& \leq \|\Re^2(A \otimes B)+\Im^2(A \otimes B)\|\\
		&& =\frac{\|A^*A \otimes B^*B+AA^* \otimes BB^*\|}{2}.
	\end{eqnarray*}
	Therefore, taking supremum over all  $f \in \mathbb{H} \otimes \mathbb{K}$ with $\|f\|=1$ we get the first inequality. The second inequality follows from 
	the triangle inequality of the operator norm $\|\cdot \|$. 
\end{proof}

	Next, we obtain a lower bound for $	w(A \otimes B).$

	\begin{theorem}\label{theo1}
		Let $A \otimes B \in \mathbb{B}(\mathbb{H} \otimes \mathbb{K}).$ Then 
		\begin{eqnarray*}
			w(A \otimes B) \geq \frac{1}{2}\|A\|\|B\|+ \frac{1}{4}\left|\|A \otimes B+A^* \otimes B^*\|-\|A \otimes B-A^* \otimes B^*\|\right|. 
		\end{eqnarray*}
	\end{theorem}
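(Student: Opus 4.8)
The plan is to view the right-hand side through the Cartesian decomposition of the single operator $T := A \otimes B$, thereby reducing the claim to a general numerical radius inequality. First I would note that $T^* = A^* \otimes B^*$, so that $A \otimes B + A^* \otimes B^* = 2\Re(T)$ and $A \otimes B - A^* \otimes B^* = 2\mathrm{i}\,\Im(T)$; hence $\|A \otimes B + A^* \otimes B^*\| = 2\|\Re(T)\|$ and $\|A \otimes B - A^* \otimes B^*\| = 2\|\Im(T)\|$. Using $\|A\|\|B\| = \|A \otimes B\| = \|T\|$, the asserted inequality becomes equivalent to
\[
w(T) \geq \tfrac12\|T\| + \tfrac12\,\bigl|\,\|\Re(T)\| - \|\Im(T)\|\,\bigr|.
\]

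Next I would collect two elementary ingredients. From the identities $w(T) = \sup_{\theta}\|\Re(\mathrm{e}^{\mathrm{i}\theta}T)\| = \sup_{\theta}\|\Im(\mathrm{e}^{\mathrm{i}\theta}T)\|$ recalled in the Introduction, setting $\theta = 0$ in each gives $w(T) \geq \|\Re(T)\|$ and $w(T) \geq \|\Im(T)\|$. Independently, the triangle inequality applied to the Cartesian decomposition $T = \Re(T) + \mathrm{i}\,\Im(T)$ gives $\|T\| \leq \|\Re(T)\| + \|\Im(T)\|$.

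Finally I would dispose of the modulus by a short case analysis. If $\|\Re(T)\| \geq \|\Im(T)\|$, then inserting the upper bound $\tfrac12\|T\| \leq \tfrac12\bigl(\|\Re(T)\| + \|\Im(T)\|\bigr)$ into the right-hand side makes it at most $\|\Re(T)\| \leq w(T)$; when $\|\Im(T)\| \geq \|\Re(T)\|$ the same computation with the roles of $\Re(T)$ and $\Im(T)$ interchanged bounds the right-hand side by $\|\Im(T)\| \leq w(T)$. I do not anticipate any genuine obstacle here: the only subtlety worth flagging is that $\tfrac12\|T\|$ appears with a \emph{positive} sign, so it is the \emph{upper} estimate $\|T\| \leq \|\Re(T)\| + \|\Im(T)\|$ (rather than a lower estimate on $\|T\|$) that drives the argument, and that the two branches of the absolute value must be treated separately so that the cancellation leaves exactly $\max\{\|\Re(T)\|, \|\Im(T)\|\}$.
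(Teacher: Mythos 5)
Your proposal is correct and follows essentially the same route as the paper: both rest on the two estimates $w(T)\geq\max\{\|\Re(T)\|,\|\Im(T)\|\}$ and $\|T\|\leq\|\Re(T)\|+\|\Im(T)\|$, with your case analysis on the absolute value being just an unfolding of the identity $\max\{a,b\}=\tfrac{a+b}{2}+\tfrac{|a-b|}{2}$ that the paper uses. The only cosmetic difference is that you quote the identity $w(T)=\sup_{\theta}\|\Re(e^{\mathrm{i}\theta}T)\|$ where the paper rederives $w(T)\geq\|\Re(T)\|$ directly from the Cartesian decomposition of $\langle Tf,f\rangle$.
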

	
	\begin{proof}
		Let $f \in \mathbb{H} \otimes \mathbb{K}$ with $\|f\|=1.$ Then by the Cartesian decomposition of $A\otimes B,$ we get
		\begin{eqnarray*}
			\left|\left\langle (A \otimes B) f,f\right \rangle \right|^2
			=\left\langle \Re(A \otimes B)f,f\right\rangle^2+\left\langle \Im(A \otimes B)f,f\right\rangle^2.
		\end{eqnarray*}
	    Therefore, we have
	    \begin{eqnarray*}
	    	\left|\left\langle (A \otimes B)f,f \right\rangle\right| \geq \left|\left\langle \Re(A \otimes B)f,f\right\rangle\right|.
	    \end{eqnarray*}
    Taking supermum over all  $f \in \mathbb{H} \otimes \mathbb{K}$ with $\|f\|=1$ we get, 
    \begin{eqnarray}\label{theo1 eqn1}
    	w(A \otimes B) \geq \|\Re(A \otimes B)\|.
    \end{eqnarray}
    Also, 
     \begin{eqnarray*}
    	\left|\left\langle (A \otimes B)f,f \right\rangle\right| \geq \left|\left\langle \Im(A \otimes B)f,f\right\rangle\right|.
    \end{eqnarray*}
    Taking supermum over all  $f \in \mathbb{H} \otimes \mathbb{K}$ with $\|f\|=1$ we get, 
    \begin{eqnarray}\label{theo1 eqn2}
      w(A \otimes B) \geq \|\Im(A \otimes B)\|.
    \end{eqnarray}
    Combining (\ref{theo1 eqn1}) and (\ref{theo1 eqn2}) we get, 
    \begin{eqnarray*}
      w(A \otimes B)& \geq & \max\left\{\|\Re(A \otimes B)\|,\|\Im(A \otimes B)\|\right\}\\
      &=& \frac{\|\Re(A \otimes B)\|+\|\Im(A \otimes B)\|}{2}+\frac{\left|\|\Re(A \otimes B)\|-\|\Im(A \otimes B)\|\right|}{2}\\
      & \geq & \frac{\|\Re(A \otimes B)+i\Im(A \otimes B)\|}{2}+\frac{\left|\|\Re(A \otimes B)\|-\|\Im(A \otimes B)\|\right|}{2}\\
      &=& \frac{\|A\|\|B\|}{2}+ \frac{\left|\|A \otimes B+A^* \otimes B^*\|-\|A \otimes B-A^* \otimes B^*\|\right|}{4}.
    \end{eqnarray*}
    Therefore, we get the desired inequality.
	\end{proof}
	
	As a consequence of the above theorem we have  the following corollary.
	
\begin{cor}\label{theo1cor1}
	Let $A \otimes B \in \mathbb{B}(\mathbb{H} \otimes \mathbb{K}).$ If the equality $w(A \otimes B)=\frac{\|A\|\|B\|}{2}$ holds then 
	$\|A \otimes B+A^* \otimes B^*\|=\|A \otimes B-A^* \otimes B^*\|=\|A\|\|B\|.$
\end{cor}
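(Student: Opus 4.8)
The plan is to obtain the corollary directly from the equality case of Theorem~\ref{theo1} together with the estimates established inside its proof. Throughout, write $a=\|\Re(A\otimes B)\|$ and $b=\|\Im(A\otimes B)\|$, and recall that since $(A\otimes B)^*=A^*\otimes B^*$ one has $2a=\|A\otimes B+A^*\otimes B^*\|$ and $2b=\|A\otimes B-A^*\otimes B^*\|$. Thus the two norms appearing in the statement are exactly $2a$ and $2b$, and proving the corollary amounts to showing $a=b=\tfrac12\|A\|\|B\|$.

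First I would feed the hypothesis $w(A\otimes B)=\tfrac12\|A\|\|B\|$ into Theorem~\ref{theo1}. Its lower bound then forces the nonnegative term $\tfrac14\big|\,2a-2b\,\big|$ to vanish, giving $a=b$; this already yields the equality $\|A\otimes B+A^*\otimes B^*\|=\|A\otimes B-A^*\otimes B^*\|$.

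Next I would recover the common value. From the interior of the proof of Theorem~\ref{theo1}, the inequalities \eqref{theo1 eqn1} and \eqref{theo1 eqn2} (equivalently, the standard facts $w(T)\ge\|\Re(T)\|$ and $w(T)\ge\|\Im(T)\|$) give $a\le w(A\otimes B)=\tfrac12\|A\|\|B\|$ and $b\le\tfrac12\|A\|\|B\|$. On the other hand, the triangle inequality together with $\|A\otimes B\|=\|A\|\|B\|$ yields $a+b\ge\|\Re(A\otimes B)+i\,\Im(A\otimes B)\|=\|A\otimes B\|=\|A\|\|B\|$. Combining $a=b$ with $a,b\le\tfrac12\|A\|\|B\|$ and $a+b\ge\|A\|\|B\|$ pins down $a=b=\tfrac12\|A\|\|B\|$, and hence $2a=2b=\|A\|\|B\|$, which is the assertion.

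There is no genuine obstacle here, since the corollary is essentially a transcription of the equality case of Theorem~\ref{theo1}. The only subtlety is that the displayed statement of Theorem~\ref{theo1} alone delivers only the equality of the two norms; to identify their common value with $\|A\|\|B\|$ one must additionally reach back into the proof for the one-sided bounds $w(A\otimes B)\ge\|\Re(A\otimes B)\|$ and $w(A\otimes B)\ge\|\Im(A\otimes B)\|$, along with the reverse triangle-type estimate $\|\Re(A\otimes B)\|+\|\Im(A\otimes B)\|\ge\|A\|\|B\|$.
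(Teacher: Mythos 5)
Your argument is correct and follows essentially the same route as the paper: Theorem~\ref{theo1} forces the two norms to coincide, and the paper then sandwiches $\|A\otimes B+A^*\otimes B^*\|\leq 2w(A\otimes B)=\|A\|\|B\|\leq\tfrac12\left(\|A\otimes B+A^*\otimes B^*\|+\|A\otimes B-A^*\otimes B^*\|\right)$, which is exactly your combination of $a,b\leq\tfrac12\|A\|\|B\|$ with $a+b\geq\|A\|\|B\|$ in the notation $a=\|\Re(A\otimes B)\|$, $b=\|\Im(A\otimes B)\|$. No substantive difference.
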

	
\begin{proof}
	If  $w(A \otimes B)=\frac{\|A\|\|B\|}{2},$ then from Theorem \ref{theo1} we get
	$\|A \otimes B+A^* \otimes B^*\|=\|A \otimes B-A^* \otimes B^*\|.$ Now, 
	\begin{eqnarray*}
		\|A \otimes B+A^* \otimes B^*\| & \leq & 2w(A \otimes B) \\
		&=& \|A\|\|B\|\\
		& \leq & \frac{\|A \otimes B+A^* \otimes B^*\|+\|A \otimes B-A^* \otimes B^*\|}{2}\\
		&=& \|A \otimes B+A^* \otimes B^*\|.
	\end{eqnarray*}
    So, we get the desired equalities
    $\|A \otimes B+A^* \otimes B^*\|=\|A \otimes B-A^* \otimes B^*\|=\|A\|\|B\|.$
\end{proof}	
	
Next, we obtain a complete characterization for the equality of $w(A \otimes B)=\frac{\|A\|\|B\|}{2}$. 	

\begin{proposition}\label{theo1prop1}
	Let $A \otimes B \in \mathbb{B}(\mathbb{H} \otimes \mathbb{K}).$ Then the equality  $w(A \otimes B)=\frac{\|A\|\|B\|}{2}$ holds if and only if 	$\| e^{i\theta} A \otimes B+e^{-i\theta}A^* \otimes B^*\|=\|e^{i\theta}A \otimes B-e^{-i\theta}A^* \otimes B^*\|=\|A\|\|B\|,$ for all $\theta \in \mathbb{R}.$
\end{proposition}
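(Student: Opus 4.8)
The plan is to translate both sides of the claimed equivalence into statements about the real and imaginary parts of the rotated operator, and then to exploit the Cartesian decomposition together with the variational formula for the numerical radius. Throughout I would write $T=A\otimes B$, so that $T^*=A^*\otimes B^*$ and $\|T\|=\|A\|\|B\|=:M$. The starting point is the identity recorded in the introduction, namely $w(T)=\sup_{\theta\in\mathbb{R}}\|\Re(e^{i\theta}T)\|=\sup_{\theta\in\mathbb{R}}\|\Im(e^{i\theta}T)\|$. Since $\Re(e^{i\theta}T)=\tfrac12(e^{i\theta}T+e^{-i\theta}T^*)$ and $\Im(e^{i\theta}T)=\tfrac1{2i}(e^{i\theta}T-e^{-i\theta}T^*)$, the two norm conditions in the proposition are exactly $\|\Re(e^{i\theta}T)\|=M/2$ and $\|\Im(e^{i\theta}T)\|=M/2$ for every $\theta$. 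This observation reduces the whole statement to proving that $w(T)=M/2$ holds if and only if both $\|\Re(e^{i\theta}T)\|=M/2$ and $\|\Im(e^{i\theta}T)\|=M/2$ for all $\theta$.

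For the backward implication I would argue directly: if $\|\Re(e^{i\theta}T)\|=M/2$ for every $\theta$, then taking the supremum over $\theta$ in the variational formula gives $w(T)=M/2$ at once. Note that only the ``$+$'' condition is used here.

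The forward implication is the substantive part. Assuming $w(T)=M/2$, the variational formula immediately yields the upper bounds $\|\Re(e^{i\theta}T)\|\le M/2$ and $\|\Im(e^{i\theta}T)\|\le M/2$ for every $\theta$. The key step is to produce a matching lower bound for the \emph{sum}: writing the Cartesian decomposition $e^{i\theta}T=\Re(e^{i\theta}T)+i\,\Im(e^{i\theta}T)$ and using $\|e^{i\theta}T\|=\|T\|=M$ together with the triangle inequality gives $M=\|e^{i\theta}T\|\le \|\Re(e^{i\theta}T)\|+\|\Im(e^{i\theta}T)\|$. Combining this with the two upper bounds forces $\|\Re(e^{i\theta}T)\|+\|\Im(e^{i\theta}T)\|=M$ while each summand is at most $M/2$; hence each summand equals exactly $M/2$ for every $\theta$. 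Translating back through the identities for $\Re$ and $\Im$ recovers the two norm equalities, which is precisely the desired conclusion.

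I do not expect a serious obstacle here: once one recognizes that the triangle inequality applied to the Cartesian decomposition of $e^{i\theta}T$ supplies a lower bound matching the numerical radius upper bound, the squeeze is automatic. The only points requiring a little care are the bookkeeping that $\|i\,\Im(e^{i\theta}T)\|=\|\Im(e^{i\theta}T)\|$, so that the imaginary part contributes its norm unchanged, and the verification that the norm conditions stated in the proposition are genuinely equivalent to the statements about $\Re$ and $\Im$; both are routine.
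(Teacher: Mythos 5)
Your proof is correct. The underlying squeeze is the same one the paper uses, but you package it differently and more self-containedly: the paper proves the necessary part by applying Corollary \ref{theo1cor1} to the rotated operator $e^{i\theta}A\otimes B$, and that corollary in turn relies on Theorem \ref{theo1} to first establish $\|A\otimes B+A^*\otimes B^*\|=\|A\otimes B-A^*\otimes B^*\|$ before pinning down the common value via $\|A\otimes B+A^*\otimes B^*\|\le 2w(A\otimes B)=\|A\|\|B\|\le\frac12(\|A\otimes B+A^*\otimes B^*\|+\|A\otimes B-A^*\otimes B^*\|)$. You bypass Theorem \ref{theo1} entirely: from $w(T)=\sup_\theta\|\Re(e^{i\theta}T)\|=\sup_\theta\|\Im(e^{i\theta}T)\|$ you get $\|\Re(e^{i\theta}T)\|\le M/2$ and $\|\Im(e^{i\theta}T)\|\le M/2$ separately, and the triangle inequality $M=\|e^{i\theta}T\|\le\|\Re(e^{i\theta}T)\|+\|\Im(e^{i\theta}T)\|$ forces both to equal $M/2$ simultaneously, for every $\theta$ at once. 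What your route buys is independence from the earlier lower-bound machinery and a uniform treatment of all rotations; what the paper's route buys is economy within its own architecture, since Theorem \ref{theo1} and Corollary \ref{theo1cor1} are already in hand and the proposition becomes a one-line invocation. Your handling of the backward direction (only the ``$+$'' condition is needed, via the variational formula) and the identification of the stated norm conditions with $\|\Re(e^{i\theta}T)\|=\|\Im(e^{i\theta}T)\|=M/2$ are both accurate.
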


\begin{proof}
	The sufficient part of the proposition follows easily, we only prove the necessary part.
	 If the equality $w(A \otimes B)=\frac{\|A\|\|B\|}{2}$ holds then form Corollary \ref{theo1cor1} we get $\|A \otimes B+A^* \otimes B^*\|=\|A \otimes B-A^* \otimes B^*\|=\|A\|\|B\|.$ As for all  $\theta \in \mathbb{R},$ $e^{i\theta}A \otimes B \in \mathbb{B}(\mathbb{H} \otimes \mathbb{K})$ and $w(A \otimes B)=w(e^{i\theta}A \otimes B).$ Therefore we have  for all $\theta \in \mathbb{R},$ $\| e^{i\theta} A \otimes B+e^{-i\theta}A^* \otimes B^*\|=\|e^{i\theta}A \otimes B-e^{-i\theta}A^* \otimes B^*\|=\|A\|\|B\|,$ as desired.
	 
\end{proof}	

In the following theorem, we obtain another lower bound for $w(A \otimes B)$ which is incomparable with the bound  obtained in Theorem \ref{theo1}.

 \begin{theorem}\label{theo3}
	Let $A \otimes B \in \mathbb{B}(\mathbb{H} \otimes \mathbb{K}).$ Then 
	\begin{eqnarray*}
		w(A \otimes B) \geq \frac{\|A\|\|B\|}{2}+ \frac{\left | \, \|A \otimes B+iA^* \otimes B^*\|-\|A \otimes B-iA^* \otimes B^*\| \, \right | }{4}.
	\end{eqnarray*}
\end{theorem}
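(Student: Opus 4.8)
The plan is to rerun the argument of Theorem \ref{theo1} verbatim, but applied to a suitably rotated copy of $A \otimes B$. The point is that the numerical radius and the operator norm are both invariant under multiplication by a unimodular scalar, so $w(A \otimes B)=w\big(e^{i\theta}(A \otimes B)\big)$ and $\|e^{i\theta}(A \otimes B)\|=\|A\|\|B\|$ for every $\theta \in \mathbb{R}$. Writing $T=A \otimes B$, so that $T^*=A^* \otimes B^*$, I would first record, exactly as in the proof of Theorem \ref{theo1}, the two one-sided bounds $w(e^{i\theta}T)\geq \|\Re(e^{i\theta}T)\|$ and $w(e^{i\theta}T)\geq \|\Im(e^{i\theta}T)\|$, each obtained by dominating $|\langle (e^{i\theta}T)f,f\rangle|$ by the modulus of its real (respectively imaginary) part and taking the supremum over unit vectors $f \in \mathbb{H} \otimes \mathbb{K}$.

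Next I would combine these two estimates through the elementary identity $\max\{a,b\}=\tfrac{a+b}{2}+\tfrac{|a-b|}{2}$ together with the triangle inequality $\|\Re(e^{i\theta}T)\|+\|\Im(e^{i\theta}T)\| \geq \|\Re(e^{i\theta}T)+i\,\Im(e^{i\theta}T)\|=\|e^{i\theta}T\|=\|A\|\|B\|$. This yields, for \emph{every} $\theta$,
\[
	w(T) \;\geq\; \frac{\|A\|\|B\|}{2}+\frac{\big|\,\|\Re(e^{i\theta}T)\|-\|\Im(e^{i\theta}T)\|\,\big|}{2}.
\]
The remaining task is purely to choose $\theta$ so that the two operator norms on the right turn into the ones appearing in the statement.

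The crucial choice is $\theta=-\tfrac{\pi}{4}$. Using $\Re(e^{i\theta}T)=\tfrac12\big(e^{i\theta}T+e^{-i\theta}T^*\big)=\tfrac12\,e^{i\theta}\big(T+e^{-2i\theta}T^*\big)$ and $e^{-2i\theta}=e^{i\pi/2}=i$, the unimodular factor $e^{i\theta}$ drops out of the norm and gives $\|\Re(e^{-i\pi/4}T)\|=\tfrac12\|T+iT^*\|$; the identical computation for the imaginary part gives $\|\Im(e^{-i\pi/4}T)\|=\tfrac12\|T-iT^*\|$. Substituting these into the displayed inequality and recalling $T=A \otimes B$ and $T^*=A^* \otimes B^*$ produces exactly the asserted bound.

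I do not anticipate a genuine obstacle, since the whole argument is just a rotation of Theorem \ref{theo1}. The only step demanding a moment's care is pinning down the angle $\theta=-\tfrac{\pi}{4}$ that converts $e^{i\theta}T\pm e^{-i\theta}T^*$ into unimodular multiples of $T\pm iT^*$; any other admissible angle (for instance $\theta=\tfrac{3\pi}{4}$) merely interchanges the two norms, which is harmless because of the absolute value. One should also verify that the factor $\tfrac{1}{2i}$ defining $\Im(\,\cdot\,)$ contributes only its modulus $\tfrac12$, so that no spurious sign or phase survives in $\|\Im(e^{-i\pi/4}T)\|$. That the resulting bound is incomparable with Theorem \ref{theo1} is then clear, as the correction terms involve the genuinely different quantities $\|A\otimes B \pm iA^*\otimes B^*\|$ rather than $\|A\otimes B \pm A^*\otimes B^*\|$.
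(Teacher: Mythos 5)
Your proposal is correct and is essentially the paper's own argument in a different wrapper: the operators $\Re(e^{-i\pi/4}T)$ and $\Im(e^{-i\pi/4}T)$ you use are exactly the unimodular multiples of $\tfrac{1}{\sqrt{2}}\left(\Re(A\otimes B)\pm\Im(A\otimes B)\right)$ that the paper bounds $w(A\otimes B)$ by in \eqref{theo3 eqn1}, and the subsequent combination via $\max\{a,b\}=\tfrac{a+b}{2}+\tfrac{|a-b|}{2}$ and the triangle inequality is identical. The only cosmetic difference is that you obtain the intermediate bounds from the rotation invariance of $w(\,\cdot\,)$ together with $w(S)\geq\|\Re(S)\|$, whereas the paper derives them directly from the pointwise inequality $x^2+y^2\geq\tfrac{1}{2}|x\pm y|^2$ applied to $\langle\Re(A\otimes B)f,f\rangle$ and $\langle\Im(A\otimes B)f,f\rangle$.
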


\begin{proof}
	Let $f \in \mathbb{H} \otimes \mathbb{K}$ with $\|f\|=1.$ Then by the Cartesian decomposition of $A\otimes B,$ we get
	\begin{eqnarray*}
		\left|\left\langle (A \otimes B)f,f \right\rangle\right|^2
		&&=\left\langle \Re(A \otimes B)f,f\right\rangle^2+\left\langle \Im(A \otimes B)f,f\right\rangle^2\\
		&& \geq \frac{1}{2}\left(\left|\left\langle \Re(A \otimes B)f,f\right\rangle\right|
		+\left|\left\langle \Im(A \otimes B)f,f\right\rangle\right|\right)^2\\
		&& \geq \frac{1}{2}\left|\left\langle (\Re(A \otimes B) \pm \Im(A \otimes B))f,f\right\rangle\right|^2.
	\end{eqnarray*}
	Therefore, 
	\begin{eqnarray*}
		\left|\left\langle (A \otimes B)f,f \right\rangle\right|
		\geq \frac{1}{\sqrt{2}}\left|\left\langle (\Re(A \otimes B) \pm \Im(A \otimes B))f,f\right\rangle\right|.
	\end{eqnarray*}
	Taking supermum over all  $f \in \mathbb{H} \otimes \mathbb{K}$ with $\|f\|=1$ we get,
	\begin{eqnarray}\label{theo3 eqn1}
		w(A \otimes B) \geq \frac{\|\Re(A \otimes B) \pm \Im(A \otimes B)\|}{\sqrt{2}}.
	\end{eqnarray}
Now, it follows from the inequalities in \eqref{theo3 eqn1} that
	\begin{eqnarray*}
		w(A \otimes B)
		&& \geq \frac{1}{\sqrt{2}}\max\left\{\|\Re(A \otimes B) + \Im(A \otimes B)\|,\|\Re(A \otimes B) - \Im(A \otimes B)\|\right\}\\
		&&= \frac{\|\Re(A \otimes B) + \Im(A \otimes B)\|+\|\Re(A \otimes B)- \Im(A \otimes B)\|}{2\sqrt{2}}\\
		&&\,\,\,\,\,+\frac{\left|\|\Re(A \otimes B) + \Im(A \otimes B)\|-\|\Re(A \otimes B) - \Im(A \otimes B)\|\right|}{2\sqrt{2}}\\ 
		&& \geq  \frac{\|(\Re(A \otimes B) + \Im(A \otimes B))+i(\Re(A \otimes B)- \Im(A \otimes B))\|}{2\sqrt{2}}\\
		&&\,\,\,\,\,+\frac{\left|\|\Re(A \otimes B) + \Im(A \otimes B)\|-\|\Re(A \otimes B) - \Im(A \otimes B)\|\right|}{2\sqrt{2}}\\ 
		&&= \frac{\|(1+i)A^* \otimes B^*\|}{2\sqrt{2}}\\
		&& \,\,\,\,\,+\frac{\left|\|\Re(A \otimes B) + \Im(A \otimes B)\|-\|\Re(A \otimes B) - \Im(A \otimes B)\|\right|}{2\sqrt{2}}\\
		&&= \frac{\|A\|\|B\|}{2}+ \frac{\left| \, \|A \otimes B+iA^* \otimes B^*\|-\|A \otimes B-iA^* \otimes B^*\| \, \right|}{4}.
	\end{eqnarray*}
	Therefore, we get the desired inequality.
\end{proof}

\begin{remark}
	If the equality $w(A \otimes B)=\frac{\|A\|\|B\|}{2}$ holds then $\|A \otimes B+iA^* \otimes B^*\|=\|A \otimes B-iA^* \otimes B^*\|.$ It should be mentioned here that the converse is not true.
\end{remark}

Next lower bound reads as follows.	
	
\begin{theorem}\label{theo2}
	Let $A \otimes B \in \mathbb{B}(\mathbb{H} \otimes \mathbb{K}).$ Then 
	\begin{eqnarray*}
		 w^2(A \otimes B)
		& \geq & \frac{\|A^*A \otimes B^*B+AA^* \otimes BB^*\|}{4}\\
		&& + \frac{\left| \, \|A \otimes B+A^* \otimes B^*\|^2-\|A \otimes B-A^* \otimes B^*\|^2 \, \right|}{8}.
	\end{eqnarray*}
\end{theorem}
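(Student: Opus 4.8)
The plan is to imitate the structure of Theorem \ref{theo1}, but to work at the level of squared norms and then split a maximum into an average plus a difference. Writing $T=A\otimes B$ for brevity, I recall from the proof of Theorem \ref{theo1} (the intermediate bounds \eqref{theo1 eqn1} and \eqref{theo1 eqn2}) that
\[
	w(T)\geq \|\Re(T)\| \quad\text{and}\quad w(T)\geq \|\Im(T)\|.
\]
Squaring these and using that $\|\Re^2(T)\|=\|\Re(T)\|^2$ and $\|\Im^2(T)\|=\|\Im(T)\|^2$ (because $\Re(T)$ and $\Im(T)$ are selfadjoint), I obtain the single inequality
\[
	w^2(T)\geq \max\bigl\{\|\Re^2(T)\|,\,\|\Im^2(T)\|\bigr\}.
\]

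Next I apply the elementary identity $\max\{a,b\}=\tfrac{a+b}{2}+\tfrac{|a-b|}{2}$ with $a=\|\Re^2(T)\|$ and $b=\|\Im^2(T)\|$, giving
\[
	w^2(T)\geq \frac{\|\Re^2(T)\|+\|\Im^2(T)\|}{2}+\frac{\bigl|\,\|\Re^2(T)\|-\|\Im^2(T)\|\,\bigr|}{2}.
\]
The one genuinely useful step is to apply the triangle inequality $\|X\|+\|Y\|\geq\|X+Y\|$ to the averaged term, replacing $\|\Re^2(T)\|+\|\Im^2(T)\|$ by the combined norm $\|\Re^2(T)+\Im^2(T)\|$. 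This preserves the chain of lower bounds for $w^2(T)$ while manufacturing the single positive operator $\Re^2(T)+\Im^2(T)$ that matches the first summand on the right-hand side.

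It then remains to translate both terms into the tensor quantities. Using the Cartesian-decomposition identity $\Re^2(T)+\Im^2(T)=\tfrac12\bigl(T^*T+TT^*\bigr)$ together with $T^*T=A^*A\otimes B^*B$ and $TT^*=AA^*\otimes BB^*$ (from $(C\otimes D)(E\otimes F)=CE\otimes DF$), the averaged term becomes $\tfrac14\|A^*A\otimes B^*B+AA^*\otimes BB^*\|$. For the difference term I use $\|\Re^2(T)\|=\|\Re(T)\|^2=\tfrac14\|T+T^*\|^2$ and $\|\Im^2(T)\|=\tfrac14\|T-T^*\|^2$, so that
\[
	\frac{\bigl|\,\|\Re^2(T)\|-\|\Im^2(T)\|\,\bigr|}{2}=\frac{\bigl|\,\|T+T^*\|^2-\|T-T^*\|^2\,\bigr|}{8},
\]
which is exactly the stated second summand after substituting $T=A\otimes B$ and $T^*=A^*\otimes B^*$.

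I expect essentially all the content to lie in the decision to invoke the triangle inequality on $\|\Re^2(T)\|+\|\Im^2(T)\|$ so as to introduce the combined operator $A^*A\otimes B^*B+AA^*\otimes BB^*$; the remaining steps are routine bookkeeping with the Cartesian decomposition and the tensor-product multiplication rule. No lemma beyond the two norm bounds already obtained in Theorem \ref{theo1} is required.
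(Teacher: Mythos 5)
Your proposal is correct and follows essentially the same route as the paper: both start from $w(A\otimes B)\geq\|\Re(A\otimes B)\|$ and $w(A\otimes B)\geq\|\Im(A\otimes B)\|$, square, write the maximum as average plus half the absolute difference, apply the triangle inequality to produce $\|\Re^2+\Im^2\|$, and then identify $\Re^2(A\otimes B)+\Im^2(A\otimes B)=\tfrac12\left(A^*A\otimes B^*B+AA^*\otimes BB^*\right)$. The only difference is cosmetic (you phrase the maximum in terms of $\|\Re^2\|$ rather than $\|\Re\|^2$, which coincide by selfadjointness).
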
	
	
	\begin{proof}
	 From the inequalities in (\ref{theo1 eqn1}) and (\ref{theo1 eqn2}) we obtain that
	\begin{eqnarray*}
		w^2(A \otimes B)
		&& \geq  \max\left\{\|\Re(A \otimes B)\|^2,\|\Im(A \otimes B)\|^2\right\}\\
		&&= \frac{\|\Re(A \otimes B)\|^2+\|\Im(A \otimes B)\|^2}{2}+\frac{\left|\|\Re(A \otimes B)\|^2-\|\Im(A \otimes B)\|^2\right|}{2}\\
		&& \geq \frac{\|\Re^2(A \otimes B)+\Im^2(A \otimes B)\|}{2}+\frac{\left|\|\Re(A \otimes B)\|^2-\|\Im(A \otimes B)\|^2\right|}{2}\\
		&& = \frac{\|A^*A \otimes B^*B+AA^* \otimes BB^*\|}{4}\\
		&& \,\,\,\,\,+ \frac{\left| \, \|A \otimes B+A^* \otimes B^*\|^2-\|A \otimes B-A^* \otimes B^*\|^2 \, \right|}{8}.
	\end{eqnarray*}
	Therefore, we get the desired inequality.
\end{proof}

Now, we give a complete characterization for the equality of $w^2(A \otimes B)=\frac{1}{4}\|A^*A \otimes B^*B+AA^* \otimes BB^*\|$.

\begin{proposition}\label{theo2prop1}
	Let $A \otimes B \in \mathbb{B}(\mathbb{H} \otimes \mathbb{K}).$ Then the equality  $$w^2(A \otimes B)=\frac{1}{4}\|A^*A \otimes B^*B+AA^* \otimes BB^*\|$$ holds if and only if 	$$\| e^{i\theta} A \otimes B+e^{-i\theta}A^* \otimes B^*\|^2=\|e^{i\theta}A \otimes B-e^{-i\theta}A^* \otimes B^*\|^2=\|A^*A \otimes B^*B+AA^* \otimes BB^*\|,$$ for all $\theta \in \mathbb{R}.$
\end{proposition}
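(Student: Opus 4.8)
The plan is to follow the template of Proposition \ref{theo1prop1}, replacing Theorem \ref{theo1} by Theorem \ref{theo2} and exploiting the rotation invariance $A \mapsto e^{i\theta}A$. Write $T = A \otimes B$, so that $T^* = A^* \otimes B^*$, $T^*T = A^*A \otimes B^*B$ and $TT^* = AA^* \otimes BB^*$; recall also that $e^{i\theta}T + e^{-i\theta}T^* = 2\Re(e^{i\theta}T)$ and $e^{i\theta}T - e^{-i\theta}T^* = 2i\,\Im(e^{i\theta}T)$, together with $w(T) = \sup_{\theta}\|\Re(e^{i\theta}T)\| = \sup_{\theta}\|\Im(e^{i\theta}T)\|$. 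The crucial structural fact, already used in the proof of Theorem \ref{theo2}, is that $\Re^2(e^{i\theta}T) + \Im^2(e^{i\theta}T) = \tfrac12(T^*T + TT^*)$ for every $\theta$, since conjugating $T$ by a unimodular scalar leaves $T^*T$ and $TT^*$ unchanged.

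The sufficiency direction is immediate: if $\|e^{i\theta}T + e^{-i\theta}T^*\|^2 = \|T^*T + TT^*\|$ for all $\theta$, then $\|\Re(e^{i\theta}T)\|^2 = \tfrac14\|T^*T+TT^*\|$ for all $\theta$, and taking the supremum over $\theta$ gives $w^2(T) = \tfrac14\|T^*T+TT^*\|$. Only the first of the two asserted equalities is needed here.

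For the necessity direction, assume $w^2(T) = \tfrac14\|T^*T + TT^*\|$. First I would apply Theorem \ref{theo2} not to $A\otimes B$ but to $(e^{i\theta}A)\otimes B = e^{i\theta}T$ for each fixed $\theta$. Since $w(e^{i\theta}T) = w(T)$ and the leading term $\tfrac14\|T^*T+TT^*\|$ is unaffected by the rotation, the hypothesis forces the correction term in Theorem \ref{theo2} to vanish, yielding $\|e^{i\theta}T + e^{-i\theta}T^*\| = \|e^{i\theta}T - e^{-i\theta}T^*\|$, equivalently $\|\Re(e^{i\theta}T)\| = \|\Im(e^{i\theta}T)\|$, for every $\theta$.

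The main obstacle is then to upgrade this equality of the two norms to the stronger statement that their common value is exactly $\tfrac12\sqrt{\|T^*T+TT^*\|}$; mere equality for each $\theta$ is not obviously enough. I would resolve this by a two-sided squeeze. On one side, $w^2(T) \geq \|\Re(e^{i\theta}T)\|^2$ for every $\theta$, by definition of $w$. On the other side, using $\|\Re(e^{i\theta}T)\| = \|\Im(e^{i\theta}T)\|$ together with the subadditivity $\|\Re^2 + \Im^2\| \leq \|\Re^2\| + \|\Im^2\| = \|\Re\|^2 + \|\Im\|^2$ (valid since $\|S^2\|=\|S\|^2$ for selfadjoint $S$), I get $2\|\Re(e^{i\theta}T)\|^2 = \|\Re(e^{i\theta}T)\|^2 + \|\Im(e^{i\theta}T)\|^2 \geq \|\Re^2(e^{i\theta}T) + \Im^2(e^{i\theta}T)\| = \tfrac12\|T^*T+TT^*\| = 2w^2(T)$, so $\|\Re(e^{i\theta}T)\|^2 \geq w^2(T)$. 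Combining the two bounds forces $\|\Re(e^{i\theta}T)\|^2 = \|\Im(e^{i\theta}T)\|^2 = w^2(T) = \tfrac14\|T^*T+TT^*\|$ for all $\theta$, which after multiplying by $4$ is exactly the two desired identities. Finally I would unwind $T = A\otimes B$ to restate everything in the original $A,B$ notation.
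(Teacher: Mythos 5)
Your proof is correct and follows essentially the same squeeze argument as the paper: writing $T=A\otimes B$, both hinge on the identity $\Re^2(e^{i\theta}T)+\Im^2(e^{i\theta}T)=\tfrac12(T^*T+TT^*)$, the triangle inequality together with $\|S^2\|=\|S\|^2$ for selfadjoint $S$, and the bounds $\|\Re(e^{i\theta}T)\|,\|\Im(e^{i\theta}T)\|\leq w(T)$. Your preliminary step invoking Theorem \ref{theo2} is in fact redundant---the final two-sided squeeze already forces $\|\Re(e^{i\theta}T)\|^2=\|\Im(e^{i\theta}T)\|^2=w^2(T)$ without first establishing that the two norms agree---but this detour does not affect correctness.
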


\begin{proof}
	The sufficient part follows easily, we only prove the necessary part.
	Let $w^2(A \otimes B)=\frac{1}{4}\|A^*A \otimes B^*B+AA^* \otimes BB^*\|$. Then for any real number  $\theta$, we have
	\begin{eqnarray*}
		&& \|A^*A \otimes B^*B+AA^* \otimes BB^*\| \\
		&& = \frac{1}{2} \| (e^{i\theta} A \otimes B+e^{-i\theta}A^* \otimes B^*)^2+(e^{i\theta}A \otimes B-e^{-i\theta}A^* \otimes B^*)^2\|\\
		&& \leq \frac{1}{2} \left( \| e^{i\theta} A \otimes B+e^{-i\theta}A^* \otimes B^*\|^2+\|e^{i\theta}A \otimes B-e^{-i\theta}A^* \otimes B^*\|^2\right)\\
		&& \leq 4w^2(A \otimes B)\\
		&& =\|A^*A \otimes B^*B+AA^* \otimes BB^*\|.
	\end{eqnarray*}
    Thus, we conclude that
    $$\| e^{i\theta} A \otimes B+e^{-i\theta}A^* \otimes B^*\|^2=\|e^{i\theta}A \otimes B-e^{-i\theta}A^* \otimes B^*\|^2=\|A^*A \otimes B^*B+AA^* \otimes BB^*\|,$$ for all $\theta \in \mathbb{R}.$
    
\end{proof}


Now, in the following theorem we obtain a lower bound for $w(A \otimes B)$ which is incomparable with the bound obtained in Theorem \ref{theo2}.

\begin{theorem}\label{theo4}
	Let $A \otimes B \in \mathbb{B}(\mathbb{H} \otimes \mathbb{K}).$ Then 
	\begin{eqnarray*}
		 w^2(A \otimes B)
		& \geq & \frac{\|A^*A \otimes B^*B+AA^* \otimes BB^*\|}{4}\\
		 && + \frac{\left| \, \|A \otimes B+iA^* \otimes B^*\|^2-\|A \otimes B-iA^* \otimes B^*\|^2 \, \right|}{8}.
	\end{eqnarray*}
\end{theorem}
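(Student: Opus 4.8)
The plan is to mirror the proof of Theorem~\ref{theo2}, but to start from the rotated lower bound \eqref{theo3 eqn1} established inside the proof of Theorem~\ref{theo3} rather than from \eqref{theo1 eqn1}--\eqref{theo1 eqn2}. Writing $R=\Re(A\otimes B)$ and $S=\Im(A\otimes B)$, inequality \eqref{theo3 eqn1} supplies the two bounds $w(A\otimes B)\geq \frac{1}{\sqrt2}\|R+S\|$ and $w(A\otimes B)\geq \frac{1}{\sqrt2}\|R-S\|$. Squaring these, taking the larger, and applying the elementary identity $\max\{a,b\}=\frac{a+b}{2}+\frac{|a-b|}{2}$, I would arrive at
\[
w^2(A\otimes B)\geq \frac{\|R+S\|^2+\|R-S\|^2}{4}+\frac{\big|\,\|R+S\|^2-\|R-S\|^2\,\big|}{4}.
\]

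For the averaged (first) term, I would note that $(R+S)^2$ and $(R-S)^2$ are positive operators, so the triangle inequality for $\|\cdot\|$ gives $\|R+S\|^2+\|R-S\|^2=\|(R+S)^2\|+\|(R-S)^2\|\geq \|(R+S)^2+(R-S)^2\|$. A one-line computation yields $(R+S)^2+(R-S)^2=2(R^2+S^2)=A^*A\otimes B^*B+AA^*\otimes BB^*$, exactly the positive operator already computed in Theorem~\ref{theo2}; hence the first term is bounded below by $\frac14\|A^*A\otimes B^*B+AA^*\otimes BB^*\|$, matching the first summand of the claim.

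The heart of the argument, and the step I expect to require the most care, is identifying the combinations $R\pm S$ with the $i$-rotated operators in the statement. Putting $T=A\otimes B$ so that $T^*=A^*\otimes B^*$, and substituting $R=\frac{T+T^*}{2}$, $S=\frac{T-T^*}{2i}$, I would factor to get $R\pm S=\frac{1\mp i}{2}\,(T\pm iT^*)$; the clean simplifications behind this are $\frac{1+i}{1-i}=i$ and $\frac{1-i}{1+i}=-i$. Since $|1\pm i|=\sqrt2$, this gives $\|R+S\|^2=\tfrac12\|A\otimes B+iA^*\otimes B^*\|^2$ and $\|R-S\|^2=\tfrac12\|A\otimes B-iA^*\otimes B^*\|^2$. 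Feeding these into the difference term produces $\frac18\big|\,\|A\otimes B+iA^*\otimes B^*\|^2-\|A\otimes B-iA^*\otimes B^*\|^2\,\big|$, which is the second summand; combining the two estimates then yields the desired inequality. The whole proof is mechanical once the factorizations $R\pm S=\frac{1\mp i}{2}(T\pm iT^*)$ are in hand, and these are precisely what convert the real/imaginary splitting used in Theorem~\ref{theo2} into the $i$-rotated bound claimed here — the only genuinely non-routine ingredient.
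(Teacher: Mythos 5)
Your proposal is correct and follows essentially the same route as the paper: both start from the rotated bound \eqref{theo3 eqn1}, square and apply $\max\{a,b\}=\frac{a+b}{2}+\frac{|a-b|}{2}$, bound the averaged term via $\|R+S\|^2+\|R-S\|^2\geq\|(R+S)^2+(R-S)^2\|=2\|R^2+S^2\|$, and identify $\|R\pm S\|^2=\frac12\|A\otimes B\pm iA^*\otimes B^*\|^2$. Your explicit factorization $R\pm S=\frac{1\mp i}{2}(T\pm iT^*)$ is a clean justification of the identification that the paper leaves implicit.
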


\begin{proof}
	It follows from the inequalities in \eqref{theo3 eqn1} that 
  \begin{eqnarray*}
      w^2(A \otimes B)
      && \geq \frac{1}{2}\max\left\{\|\Re(A \otimes B) + \Im(A \otimes B)\|^2,\|\Re(A \otimes B) - \Im(A \otimes B)\|^2\right\}\\
      &&= \frac{\|\Re(A \otimes B) + \Im(A \otimes B)\|^2+\|\Re(A \otimes B)- \Im(A \otimes B)\|^2}{4}\\
      && \,\,\,\,\,+\frac{\left|\|\Re(A \otimes B) + \Im(A \otimes B)\|^2-\|\Re(A \otimes B) - \Im(A \otimes B)\|^2\right|}{4}\\ 
      && \geq  \frac{\|(\Re(A \otimes B) + \Im(A \otimes B))^2+(\Re(A \otimes B)- \Im(A \otimes B))^2\|}{4}\\
      &&\,\,\,\,\,+\frac{\left|\|\Re(A \otimes B) + \Im(A \otimes B)\|^2-\|\Re(A \otimes B) - \Im(A \otimes B)\|^2\right|}{4}\\ 
      &&= \frac{\|\Re^2(A \otimes B) + \Im^2(A \otimes B)\|}{2}\\
      &&\,\,\,\,\,+\frac{\left|\|\Re(A \otimes B) + \Im(A \otimes B)\|^2-\|\Re(A \otimes B) - \Im(A \otimes B)\|^2\right|}{4}\\
      &&= \frac{\|A^*A \otimes B^*B+AA^* \otimes BB^*\|}{4}\\
      &&\,\,\,\,\,+ \frac{\left| \, \|A \otimes B+iA^* \otimes B^*\|^2-\|A \otimes B-iA^* \otimes B^*\|^2 \, \right|}{8}.
  \end{eqnarray*}
Thus, we get the desired inequality.
\end{proof}

\begin{remark}
	If the equality $w^2(A \otimes B)=\frac{\|A^*A \otimes B^*B+AA^* \otimes BB^*\|}{4}$ holds then $\|A \otimes B+iA^* \otimes B^*\|=\|A \otimes B-iA^* \otimes B^*\|.$ However, the converse is not necessarily true.
\end{remark}

Finally, we obtain the following inequality.	
	
\begin{theorem}\label{theo5}
	Let $A \otimes B \in \mathbb{B}(\mathbb{H} \otimes \mathbb{K}).$ Then 
	\begin{eqnarray*}
		\|A \otimes B\|^2-w^2(A \otimes B)
		\leq \inf_{\lambda \in \mathbb{C}}\left\{\|A \otimes B-\lambda I \otimes I\|^2-c^2(A \otimes B-\lambda I \otimes I)\right\}.
	\end{eqnarray*}
\end{theorem}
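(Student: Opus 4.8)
The plan is to prove the pointwise statement that for \emph{each} $\lambda \in \mathbb{C}$,
\[
\|A \otimes B\|^2 - w^2(A \otimes B) \leq \|A \otimes B - \lambda I \otimes I\|^2 - c^2(A \otimes B - \lambda I \otimes I),
\]
and then take the infimum over $\lambda$. Writing $T = A \otimes B$ and $J = I \otimes I$ for brevity, I note that the tensor structure is irrelevant to this argument: everything below works for an arbitrary $T \in \mathbb{B}(\mathbb{H} \otimes \mathbb{K})$, with $J$ the identity.

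The crucial first step is a shift-invariance identity for the ``variance'' $\|Tf\|^2 - |\langle Tf, f\rangle|^2$. For a unit vector $f$ and any scalar $\lambda$, a direct computation using $\langle f,f\rangle = 1$ gives $(T-\lambda J)f - \langle (T-\lambda J)f, f\rangle f = Tf - \langle Tf, f\rangle f$, since the $\lambda$-contributions cancel. As $Tf - \langle Tf, f\rangle f$ is orthogonal to $f$, the Pythagorean theorem yields
\[
\|Tf\|^2 - |\langle Tf, f\rangle|^2 = \|(T-\lambda J)f\|^2 - |\langle (T-\lambda J)f, f\rangle|^2
\]
for every unit $f$ and every $\lambda \in \mathbb{C}$.

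Fixing $\lambda$, I would then use the identity to rewrite $\|Tf\|^2 = |\langle Tf, f\rangle|^2 + \big(\|(T-\lambda J)f\|^2 - |\langle (T-\lambda J)f, f\rangle|^2\big)$ and bound the three terms on the right: $|\langle Tf, f\rangle|^2 \leq w^2(T)$, $\|(T-\lambda J)f\|^2 \leq \|T-\lambda J\|^2$, and $|\langle (T-\lambda J)f, f\rangle|^2 \geq c^2(T-\lambda J)$. This gives $\|Tf\|^2 \leq w^2(T) + \|T-\lambda J\|^2 - c^2(T-\lambda J)$, and since the right-hand side does not depend on $f$, taking the supremum over unit vectors $f$ turns the left-hand side into $\|T\|^2$. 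Rearranging produces the pointwise inequality, and the infimum over $\lambda \in \mathbb{C}$ then yields the claim.

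There is no serious obstacle here once the variance identity is observed; the only points to keep straight are the orientations of the three estimates — in particular that $c^2(T-\lambda J)$ must enter as a \emph{lower} bound for $|\langle (T-\lambda J)f, f\rangle|^2$ — together with the observation that passing to the supremum is legitimate precisely because the bounding constant is independent of $f$, so that no maximizing-vector or compactness argument is required even when $\mathbb{H} \otimes \mathbb{K}$ is infinite dimensional.
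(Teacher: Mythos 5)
Your proposal is correct and follows essentially the same route as the paper: both rest on the shift-invariance of the variance $\|Tf\|^2-|\langle Tf,f\rangle|^2$ under $T\mapsto T-\lambda I\otimes I$, followed by the bounds $\|(T-\lambda I\otimes I)f\|^2\leq\|T-\lambda I\otimes I\|^2$ and $|\langle (T-\lambda I\otimes I)f,f\rangle|^2\geq c^2(T-\lambda I\otimes I)$, a supremum over unit vectors $f$, and an infimum over $\lambda$. Your Pythagorean justification of the identity and your slightly more careful bookkeeping when passing to the supremum are only cosmetic refinements of the paper's argument.
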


\begin{proof}
		Let $f \in \mathbb{H} \otimes \mathbb{K}$ with $\|f\|=1.$ Then for any $\lambda \in \mathbb{C},$ we have
		\begin{eqnarray*}
			 \left\|(A \otimes B)f\right\|^2-\left|\left\langle (A \otimes B) f,f\right \rangle \right|^2
			&&= \left\|(A \otimes B-\lambda I \otimes I)f\right\|^2\\
			&& \,\,\,\,\,\,\,-\left|\left\langle (A \otimes B-\lambda I \otimes I)f,f\right\rangle\right|^2\\
			&& \leq \|A \otimes B-\lambda I \otimes I\|^2-c^2(A \otimes B-\lambda I \otimes I).
		\end{eqnarray*}
	 Therefore, taking supremum over all  $f \in \mathbb{H} \otimes \mathbb{K}$ with $\|f\|=1$, we get
	 \begin{eqnarray}\label{theo5 eqn1}
	 	\|A \otimes B\|^2-w^2(A \otimes B)
	 	\leq \|A \otimes B-\lambda I \otimes I\|^2-c^2(A \otimes B-\lambda I \otimes I).
	 \end{eqnarray}
	 As the inequality (\ref{theo5 eqn1}) holds for all $\lambda \in \mathbb{C},$ so taking infimum  over all  $\lambda \in \mathbb{C}$
	 we get the required inequality.
\end{proof}

\bibliographystyle{amsplain}

\end{document}